\newtheorem{thm}{Theorem}
\newtheorem{lemma}[thm]{Lemma}
\newtheorem{cor}[thm]{Corollary}
\newtheorem{conjecture}{Conjecture}
\theoremstyle{definition}
\newenvironment{claim}[2]{\vspace{0.2cm}\par\noindent\emph{Claim $#1$.}\space#2}{}
\newcommand{\N}{\mathbb{N}}
\newcommand{\K}[1]{\vec{K}_{#1} }
\newcommand{\A}{\mathcal{A}}
\newcommand{\U}{\mathcal{U}}
\newcommand{\singleton}[1]{\{#1\}}
\newcommand{\set}[1]{\{#1\}}
\begin{document}
\author{Carl B\"urger}
\author{Max Pitz}
\address{University of Hamburg, Department of Mathematics, Bundesstra{\ss}e 55 (Geomatikum), 20146 Hamburg, Germany}
\email{carl.buerger@uni-hamburg.de, max.pitz@uni-hamburg.de}

\title[Decomposing complete symmetric digraphs into monochromatic paths]{Decomposing edge-coloured complete symmetric digraphs into monochromatic paths}  

\keywords{Complete symmetric digraph; monochromatic path decomposition, edge-colourings}

\subjclass[2010]{05C15, 05C20, 05C35, 05C63}

\begin{abstract}
Confirming and extending a conjecture by Guggiari, we show that every countable $(r+1)$-edge-coloured complete symmetric digraph containing no directed paths of edge-length $\ell_i$ for any colour $i\leq r$ can be covered by $\prod_{i\leq r} \ell_i$ pairwise disjoint monochromatic directed paths in colour $r+1$.   
\end{abstract}

\maketitle
\section{Introduction}
For $n \in \N$ we write $[n] = \{1,2,\ldots, n \}$. Let $K_n$ be the complete undirected graph with vertex set $[n]$, and $\K{n}$ be the complete symmetric digraph on $[n]$, i.e.\ the directed graph where every edge of $K_n$ appears in both its orientations. A \emph{tournament} of order $n$ is a complete antisymmetric digraph on $[n]$, i.e.\ a directed graph in which every edge of $K_n$ appears in precisely one of its possible orientations. Similarly, let $K_\N$ and $\K{\N}$ be the complete graph and the complete symmetric digraph on the positive integers respectively.

Let $G$ be a digraph. A sequence $v_1, v_2, \ldots,v_{n}$ of vertices such that there is an oriented edge $\vec{e}_i = (v_i,v_{i+1}) \in E(G)$ from $v_i$ to $v_{i+1}$ for all $i \in [n-1]$ is a \emph{directed} path in $G$ of \emph{length} $n-1$ (in this paper, the length of a path always refers to its edge-length). A sequence $v_1,v_2,v_3, \ldots$ satisfying the above conditions for all $i \in \N$ is a \emph{one-way infinite directed path}. The term \emph{directed path} may refer to both finite and one-way infinite directed paths.

The \emph{upper density} of a set $A \subseteq \N$ is defined as
\[\overline{d}(A) = \limsup_{n \to \infty} \frac{|A \cap [n]|}{n}.\]

DeBiasio and McKenney \cite{debiasio2016density} have recently shown that for every $\varepsilon > 0$, there exists a $2$-edge-colouring of $\K{\N}$ such that every monochromatic directed path has upper density less than $\varepsilon$. Answering a question of the above-named authors \cite[Problem 8.3]{debiasio2016density}, Guggiari \cite{Guggiari} constructed a $2$-edge-colouring of $\K{\N}$ such that every monochromatic path has upper density $0$, but observed that if one restricts the maximal length of directed paths in the first colour, then there must be monochromatic paths in the second colour with non-vanishing upper density. More generally:

\begin{thm}[Guggiari]
\label{thm_guggiariupperdensity}
For any edge colouring $c\colon E(\K{\N}) \to [r+1]$ for which there are no directed paths of length $\ell_i$ in colour $i$ for any $i\in [r]$, there is a monochromatic directed path in colour $r+1$ with upper density at least $\prod_{i\leq r} \ell_i^{-1}$.
\end{thm}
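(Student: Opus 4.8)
The plan is to reduce the statement to a Gallai--Roy-type colouring bound for each of the first $r$ colours, and then to finish with an elementary pigeonhole argument for upper density. For a fixed colour $i \le r$, write $D_i$ for the spanning subdigraph of $\K{\N}$ consisting of all arcs of colour $i$. The hypothesis (that there is no colour-$i$ directed path of length $\ell_i$) says precisely that every directed path in $D_i$ has at most $\ell_i$ vertices. I would show that the underlying undirected graph $H_i$ of $D_i$ --- with an edge $uv$ whenever at least one of the arcs $(u,v),(v,u)$ has colour $i$ --- admits a proper vertex colouring $f_i \colon \N \to [\ell_i]$.

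To produce $f_i$ I would adapt the classical Gallai--Roy argument, taking care that $D_i$ may contain directed cycles, in particular $2$-cycles arising when both orientations of an edge receive colour $i$. Using Zorn's lemma (a union of a chain of acyclic subdigraphs is acyclic, since a cycle is finite), fix a maximal spanning acyclic subdigraph $D_i' \subseteq D_i$, and set $f_i(v)$ to be the number of vertices on a longest directed path of $D_i'$ ending at $v$. This is well defined since every such path has at most $\ell_i$ vertices, so $f_i(v) \in [\ell_i]$. For an arc $(u,v) \in D_i'$ one has $f_i(u) < f_i(v)$ by the usual acyclicity argument, while for an arc $(u,v) \in D_i \setminus D_i'$ maximality forces a directed $v$--$u$ path inside $D_i'$, whence $f_i(v) < f_i(u)$. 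In either case $f_i(u) \ne f_i(v)$, so $f_i$ is a proper colouring of $H_i$.

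Next I would combine these colourings into the single map $F = (f_1,\dots,f_r) \colon \N \to [\ell_1]\times\cdots\times[\ell_r]$, whose range has size $N := \prod_{i\le r}\ell_i$. The key observation is that each fibre $A = F^{-1}(\vec a)$ is monochromatic of colour $r+1$: if $u \ne v$ both lie in $A$ then $f_i(u)=f_i(v)$ for every $i\le r$, so by properness $uv$ is an edge of no $H_i$, meaning neither arc between $u$ and $v$ has a colour in $[r]$; hence both arcs have colour $r+1$, and $A$ induces a complete symmetric digraph in colour $r+1$.

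Finally, since the at most $N$ fibres of $F$ partition $\N$ and upper density is subadditive, the inequality $1 = \overline{d}(\N) \le \sum_{\vec a} \overline{d}(F^{-1}(\vec a))$ forces some fibre $A$ to satisfy $\overline{d}(A) \ge 1/N = \prod_{i\le r}\ell_i^{-1}$. In particular $A$ is infinite; listing its elements in increasing order yields a sequence in which every consecutive pair spans a colour-$(r+1)$ arc, i.e.\ a one-way infinite monochromatic directed path in colour $r+1$ whose vertex set is exactly $A$, so its upper density is at least $\prod_{i\le r}\ell_i^{-1}$. The one genuinely delicate point is the Gallai--Roy step: the argument must accommodate directed cycles within a single colour class, which is exactly what passing to a maximal acyclic subdigraph (and separately checking the two types of arc) achieves; everything else is bookkeeping together with the subadditivity of $\overline{d}$.
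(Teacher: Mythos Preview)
Your proof is correct, and in fact establishes more than the stated density result: the fibres of $F$ partition $\N$ into at most $\prod_{i\le r}\ell_i$ classes, each inducing a complete symmetric digraph in colour $r+1$, so listing each nonempty fibre in increasing order already yields a cover of $\N$ by $\prod_{i\le r}\ell_i$ pairwise disjoint monochromatic directed paths in colour $r+1$ --- which is precisely Theorem~\ref{main decomposition thm}. The paper does not prove Theorem~\ref{thm_guggiariupperdensity} directly (it is attributed to Guggiari) but rather deduces it from Theorem~\ref{main decomposition thm}, whose proof in the infinite case occupies Sections~\ref{sec5}--\ref{sec6} and proceeds via an iterated partition refinement into strongly ${<}\aleph_0$-connected pieces together with the extraction of a transversal tournament avoiding colour $r+1$. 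Your route --- applying a Gallai--Roy argument to each colour-$i$ subdigraph of $\K{\N}$ and taking the product colouring --- is the infinite-graph analogue of the argument behind Theorem~\ref{po-shenloh thm}, and bypasses that machinery entirely. What the paper's approach buys is additional structural information (Corollary~\ref{cor finite decomposition thm}, the ${<}\aleph_0$-connected partition of Lemma~\ref{lemma for infinite decomposition thm}); for the bare density and decomposition statements, your argument is substantially shorter.
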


After establishing the upper-density result, Guggiari concludes her paper with the following conjecture:

\begin{conjecture}[Guggiari]
Take any $2$-edge-colouring of $\K{\N}$ that does not contain a red directed path of length $\ell$. Then the vertices of $\K{\N}$ can be covered by at most $\ell$ vertex-disjoint blue directed paths.
\end{conjecture}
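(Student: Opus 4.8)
The plan is to reduce the statement to the classical Gallai--Roy theorem by first partitioning the vertex set into few classes that span no red edges at all. Write $R$ for the spanning sub-digraph of $\K{\N}$ consisting of all red oriented edges. The hypothesis that there is no red directed path of length $\ell$ says precisely that every directed path in $R$ has at most $\ell$ vertices (the vertices of a directed path being distinct, a red $2$-cycle does not produce arbitrarily long red paths).

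First I would establish the following infinite, non-oriented version of Gallai--Roy: if every directed path of a digraph $D$ has at most $k$ vertices, then the underlying undirected graph of $D$ is properly $k$-colourable. To prove it, choose a maximal acyclic spanning sub-digraph $F \subseteq D$ by Zorn's lemma (the union of a chain of acyclic edge-sets is again acyclic, since any directed cycle uses only finitely many edges), and colour each vertex $v$ by the number $f(v)$ of vertices in a longest directed path of $F$ ending at $v$. This is well defined with $f(v) \in \{1, \dots, k\}$, because $F \subseteq D$ is acyclic with all paths on at most $k$ vertices. For an edge $(u,v) \in F$ one gets $f(u) < f(v)$, since a longest $F$-path ending at $u$ avoids $v$ (else $F$ would contain a cycle) and so extends by $v$; for an edge $(u,v) \in E(D) \setminus F$ maximality forces an $F$-directed path from $v$ to $u$, whence $f(v) < f(u)$. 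Either way the endpoints of each edge get distinct colours.

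Applying this with $D = R$ and $k = \ell$ partitions $\N$ into classes $V_1, \dots, V_\ell$ (some possibly empty) none of which spans a red edge in either orientation. Since $\K{\N}$ is complete and symmetric, for every $i$ and all distinct $u, v \in V_i$ both oriented edges between $u$ and $v$ are blue, so $V_i$ induces a complete symmetric blue digraph. Enumerating $V_i = \{a_1, a_2, \dots\}$ then gives the blue directed path $a_1 \to a_2 \to \cdots$ (finite or one-way infinite) covering $V_i$, and these at most $\ell$ paths are vertex-disjoint because the $V_i$ partition $\N$.

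The only genuine work is the infinite Gallai--Roy step, and the main obstacle there is that $R$ may contain $2$-cycles and longer directed cycles, so the standard acyclic-orientation formulation does not apply verbatim; passing to a maximal acyclic sub-digraph and exploiting the boundedness of red paths to keep $f$ finite is what repairs this. I expect the same argument to yield the full theorem of the abstract: for an $(r+1)$-colouring, apply the partition result to each colour $i \leq r$ and take the common refinement of the $r$ resulting partitions, producing at most $\prod_{i \leq r} \ell_i$ classes each of which spans only edges of colour $r+1$ and is thus covered by a single monochromatic directed path in colour $r+1$.
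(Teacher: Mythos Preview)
Your argument is correct and substantially simpler than the paper's. The paper establishes the infinite case by first producing (Lemma~\ref{lemma for infinite decomposition thm}) a finite partition of $\N$ whose non-singleton classes are merely strongly ${<}\aleph_0$-connected in colour $r+1$, and then bounding the number of classes via a separate construction in Section~\ref{sec6} that extracts a sub-tournament with colours in $[r]$ on carefully chosen representatives and applies Theorem~\ref{po-shenloh thm}. You bypass all of this by applying an infinite-digraph version of Gallai--Roy directly to each colour $i\le r$: the common refinement of the resulting partitions has at most $\prod_{i\le r}\ell_i$ cells, each of which spans \emph{no} edge of colour ${\le}r$ at all, and hence induces a complete symmetric digraph in colour $r+1$ that is covered trivially by enumerating its vertices as a single directed path. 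This avoids Lemmas~\ref{lemma strongly connected then path} and~\ref{lemma for infinite decomposition thm} and the entire tournament-on-representatives argument. The paper's route does yield Corollary~\ref{cor finite decomposition thm} as a structural by-product, and its partition lemma may be of independent interest; but for the decomposition statement itself your approach is shorter, and the only non-routine step---extending Gallai--Roy to infinite digraphs possibly containing $2$-cycles via a Zorn-maximal acyclic sub-digraph and the longest-path colouring---is carried out correctly.
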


Clearly, such a partition result would imply the corresponding upper density result, for if one has a partition into at most $\ell$ vertex-disjoint directed paths, then by the pigeon hole principle, one of the paths must have upper density at least $1/\ell$.

The purpose of this note is to confirm Guggiari's conjecture, and furthermore to extend it to all finite edge-colourings of $\K{\N}$, with possibly more than two colours.

\begin{thm}\label{main decomposition thm} 
Let $G$ be a complete symmetric digraph, either finite or countably infinite. Then for every $(r+1)$-edge-colouring for which there is no monochromatic directed path of edge-length $\ell_i$ in colour $i$ for any $i\in [r]$, the vertex set of $G$ can be covered by $\prod_{i\leq r} \ell_i$ pairwise disjoint monochromatic directed paths in colour $r+1$.  
\end{thm}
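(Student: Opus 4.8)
The plan is to induct on the number $r$ of restricted colours, peeling off colour $r$ at each step via a Gallai--Roy type level decomposition. Write $G_r$ for the spanning subdigraph of $G$ consisting of all arcs coloured $r$; the hypothesis that $G$ contains no colour-$r$ directed path of edge-length $\ell_r$ means precisely that every directed path in $G_r$ has at most $\ell_r-1$ edges. I would first record the base case $r=0$: here the colouring uses only colour $1=r+1$, every arc is available, and a single directed path covers $V(G)$ --- take a Hamiltonian directed path if $G$ is finite, or a one-way infinite directed path $w_1 \to w_2 \to \cdots$ enumerating $V(G)$ if $G$ is countably infinite (all arcs exist since $G$ is complete symmetric). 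This accounts for the empty product $\prod_{i\le 0}\ell_i = 1$.

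For the inductive step, the key is to partition $V(G)$ into $\ell_r$ classes, each inducing no colour-$r$ arc. This amounts to a proper $\ell_r$-colouring of the underlying undirected graph $U(G_r)$, and it is exactly what the Gallai--Roy theorem delivers: a digraph whose longest directed path has at most $\ell_r-1$ edges has chromatic number at most $\ell_r$. Concretely, passing to a maximal acyclic subdigraph $D'$ of $G_r$ and setting $f(v)$ to be the edge-length of the longest directed path of $D'$ ending at $v$ gives a proper colouring $v \mapsto f(v)\in\{0,\ldots,\ell_r-1\}$ of $U(G_r)$. Call the resulting classes $V_1,\ldots,V_{\ell_r}$. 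Since $G$ is complete symmetric, each induced subdigraph $G[V_j]$ is again complete symmetric; since $V_j$ carries no colour-$r$ arc, $G[V_j]$ is coloured with $\{1,\ldots,r-1,r+1\}$ and still contains no colour-$i$ path of edge-length $\ell_i$ for $i\le r-1$. The induction hypothesis, with $r+1$ playing the role of the distinguished colour and $r-1$ restricted colours, covers each $V_j$ by at most $\prod_{i\le r-1}\ell_i$ pairwise disjoint colour-$(r+1)$ directed paths. These paths lie in disjoint vertex classes, so their union over $j$ is a family of at most $\ell_r\cdot\prod_{i\le r-1}\ell_i = \prod_{i\le r}\ell_i$ pairwise disjoint colour-$(r+1)$ directed paths covering $V(G)$, as required.

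The main point requiring care is the passage to the countably infinite setting. The Gallai--Roy colouring must be justified for infinite $G_r$: the subdigraph $G_r$ may well contain (short) directed cycles, so the naive level function ``longest colour-$r$ path ending at $v$'' need not increase along arcs, and one should instead invoke a maximal acyclic subdigraph (Zorn's lemma) or deduce the bound $\chi(U(G_r))\le \ell_r$ from the finite case through the de Bruijn--Erd\H{o}s theorem, using that every finite subdigraph of $G_r$ still has longest path at most $\ell_r-1$. One must also observe that an infinite class $V_j$ is covered by a single one-way infinite directed path, which is harmless since $G[V_j]$ remains complete symmetric after the reduction. With these points settled the induction goes through uniformly for finite and countable $G$, and --- reading ``covered by $k$ paths'' as ``by at most $k$ paths'', padding with trivial one-vertex paths if an exact count is wanted --- yields the stated bound $\prod_{i\le r}\ell_i$.
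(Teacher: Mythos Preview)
Your argument is correct and substantially simpler than the paper's. Both routes ultimately rest on the Gallai--Hasse--Roy--Vitaver theorem, but deploy it differently. You apply it directly to peel off colour $r$, partitioning $V(G)$ into $\ell_r$ classes carrying no colour-$r$ arc and then recursing on each class; this treats the finite and countably infinite cases uniformly, modulo the de~Bruijn--Erd\H{o}s\,/\,Zorn step you correctly flag. The paper instead proves the finite case by a minimality argument---taking an optimal cover by colour-$(r+1)$ paths and showing that the initial vertices of these paths span a subtournament using only colours in $[r]$, whose order is then bounded via the product-colouring form of Gallai--Roy (Theorem~\ref{po-shenloh thm}). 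For the countably infinite case the paper works considerably harder: it constructs a finite partition of $\N$ into classes that are strongly ${<}\aleph_0$-connected in colour $r+1$ (Lemma~\ref{lemma for infinite decomposition thm}), covers each infinite class by a single one-way infinite path (Lemma~\ref{lemma strongly connected then path}), and then carefully builds an auxiliary tournament on representatives of the classes to bound their number. Your inductive construction bypasses all of this machinery. What the paper's approach buys in exchange is the structural Corollary~\ref{cor finite decomposition thm}---that an optimal colour-$(r+1)$ path cover of size $k$ forces a $k$-vertex subtournament avoiding colour $r+1$ entirely---which your bottom-up construction does not yield.
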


Our proof contains three new ideas. First, following a proof strategy of Loh \cite{Po-ShenLoh}, we show in Section~\ref{sec3} that edge-coloured tournaments without long monochromatic paths are somewhat small. Second, in Section~\ref{sec4}, we show that for a finitely edge-coloured $\K{n}$, the number $k$ of monochromatic paths in the last colour needed to cover the vertex set gives rise to a subtournament of $\K{n}$ of order $k$ where the last colour does not occur. 

These two observations will then be used as follows: In Section~\ref{sec5}, following the proof strategy by Guggiari \cite{Guggiari}, we construct a well-behaved finite partition $\U$ of $\N$ such that each partition classes can be covered by a monochromatic path in the last colour. The third and final new piece is in Section~\ref{sec6} to construct a subtournament of $\K{\N}$ not using the last colour which (almost) spans $\U$ --- allowing us to apply observations one and two to bound the number of partition classes in $\U$, giving rise to a decomposition into few monochromatic paths in the last colour. 

We remark that for the infinite case $\K{\N}$, the upper bound of $\prod_{i\leq r} \ell_i$ in Theorem~\ref{main decomposition thm} is best possible by the example in \cite[Figure~5]{Guggiari}. It would be interesting to know whether in the finite case the bound can be improved.

\section{Monochromatic paths in finite edge-coloured tournaments}
\label{sec3}

Before investigating complete symmetric digraphs, we begin our discussion with a  result on longest monochromatic paths in finitely edge-coloured tournaments.

One of the main results in a recent paper by Loh \cite{Po-ShenLoh} is that every $r$-edge coloured tournament of order $n$ contains a monochromatic directed path of length at least $\sqrt[\leftroot{2}\uproot{2}r]{n}-1$.\footnote{Loh defines the length of a path to be its \emph{vertex-length}, and so we have to add an additional `$-1$' in our statements, translating between  edge- and vertex-length.} The same proof method as in \cite[Theorem 1.1]{Po-ShenLoh}, with only minor modifications, also implies the following imbalanced version of this result:

\begin{thm}\label{po-shenloh thm}
Let $T$  be a finite tournament and $c\colon E(T)\rightarrow [r]$ an edge-colouring. Moreover, assume that there is no directed path of length $\ell_i$ in colour $i$ for $i\in [r]$. Then $T$ has order at most $\prod_{i\leq r} \ell_i$.
\end{thm}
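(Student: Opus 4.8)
The plan is to prove the bound by induction on the number of colours $r$, peeling off the last colour by means of a Gallai--Roy-type layering argument. The base case $r=0$ is immediate: a tournament with no edges has at most one vertex, matching the empty product $\prod_{i\leq 0}\ell_i = 1$. For the inductive step I would isolate the sub-digraph $D$ of $T$ consisting of exactly the colour-$r$ edges, partition $V(T)$ into few classes on none of which colour $r$ appears, and then apply the induction hypothesis to each class, which induces a sub-tournament effectively coloured with only $r-1$ colours. This mirrors the iterated layering underlying Loh's argument.

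The heart of the matter is the layering of $D$. Since $T$ contains no directed path of colour $r$ and length $\ell_r$, every directed path of $D$ has length at most $\ell_r-1$. I would like to partition $V(T)$ into at most $\ell_r$ classes, each \emph{independent} in $D$ (containing no colour-$r$ edge). The naive attempt---assigning to each vertex $v$ the length of a longest colour-$r$ path ending at $v$---fails precisely because $D$ may contain monochromatic directed cycles, along which this quantity need not strictly increase. The fix is the standard Gallai--Roy device: fix a maximal acyclic spanning sub-digraph $D'\subseteq D$ and set $f(v)$ to be the length of a longest directed path of $D'$ ending at $v$, so that $f$ takes values in $\{0,1,\ldots,\ell_r-1\}$. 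For any colour-$r$ edge $(u,v)$ one then checks that $f(u)\neq f(v)$: if $(u,v)\in D'$ then $f(v)>f(u)$, while if $(u,v)\notin D'$ then maximality of $D'$ forces a $D'$-path from $v$ to $u$ and hence $f(u)>f(v)$. Thus the fibres $V_t=f^{-1}(t)$, of which there are at most $\ell_r$, are independent in $D$.

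It then remains to bound each $|V_t|$. Each $V_t$ induces a sub-tournament of $T$ in which colour $r$ does not occur, so it carries an edge-colouring with colours in $[r-1]$, and it still contains no directed path of colour $i$ and length $\ell_i$ for any $i\leq r-1$, since any such path would also be a path of $T$. By the induction hypothesis, $|V_t|\leq \prod_{i\leq r-1}\ell_i$. Summing over the at most $\ell_r$ classes yields
\[ |V(T)| \;=\; \sum_{t} |V_t| \;\leq\; \ell_r\cdot\prod_{i\leq r-1}\ell_i \;=\; \prod_{i\leq r}\ell_i, \]
as desired.

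The main obstacle, and the sole real modification over the most naive counting, is the presence of monochromatic directed cycles, which is exactly what the maximal-acyclic-subdigraph step is designed to neutralise. Once the layering into colour-$r$-independent classes is in hand, the multiplicative bound falls out of the routine induction above.
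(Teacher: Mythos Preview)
Your proof is correct and is essentially the same argument as the paper's, just structured inductively rather than in one shot. The paper invokes the Gallai--Hasse--Roy--Vitaver theorem as a black box to deduce $\chi(G_i)\le\ell_i$ for each colour class $G_i$ of the underlying $K_n$, and then applies the product colouring to obtain $n=\chi(K_n)\le\prod_i\chi(G_i)\le\prod_i\ell_i$; your maximal-acyclic-subdigraph layering is precisely the standard proof of GHRV, and your induction on $r$ is the product-colouring step unrolled one colour at a time.
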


The proof relies on the following theorem, which, as noted in \cite{Po-ShenLoh}, has been independently discovered in \cite{gallai1968directed, hasse1965algebraischen, roy1967nombre, vitaver1962determination}.

\begin{thm}[Gallai-Hasse-Roy-Vitaver]
\label{thm_GHRV}
Let $G$ be an undirected graph with chromatic number $k$. No matter how its edges are oriented, the resulting directed graph contains a directed path of length at least $k-1$. \qed
\end{thm}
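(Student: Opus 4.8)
The plan is to prove the quantitative statement underlying the theorem: for an \emph{arbitrary} orientation $D$ of $G$, if $L$ denotes the edge-length of a longest directed path in $D$, then $\chi(G) \le L+1$. Since $\chi(G) = k$ by hypothesis, this immediately yields $L \ge k-1$, i.e.\ a directed path of length at least $k-1$. The engine behind the bound $\chi(G) \le L+1$ is a ``longest-path'' colouring: colour each vertex $v$ by the length of a longest directed path ending at $v$. The point is that along any arc $(u,v)$ this quantity should strictly increase, so the colouring would separate the endpoints of every edge.

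The obstacle is that directed cycles make the naive colouring fail: if $(u,v)$ is an arc and $v$ already lies on a longest directed path ending at $u$, we cannot extend that path by $(u,v)$ without repeating a vertex, so the desired strict inequality may break down. To repair this I would first pass to a maximal acyclic subdigraph $D'$ of $D$ --- that is, a subset $A'$ of the arcs, maximal subject to $(V(G),A')$ containing no directed cycle. Working inside the acyclic digraph $D'$, I define $f(v)$ to be the edge-length of a longest directed path of $D'$ ending at $v$; acyclicity guarantees that $f$ is well defined and finite.

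The key step is then to check that $f$ is a proper colouring of the underlying graph $G$. Take any undirected edge, oriented as an arc $(u,v)$ in $D$, and split into two cases. If $(u,v) \in A'$, then since $D'$ is acyclic the terminal vertex $v$ cannot lie on a longest $D'$-path ending at $u$ (otherwise that path together with $(u,v)$ would close a directed cycle in $D'$); appending $(u,v)$ therefore produces a strictly longer path ending at $v$, giving $f(v) \ge f(u)+1$. If instead $(u,v) \notin A'$, then by maximality of $A'$ adding this arc creates a directed cycle, so $D'$ already contains a directed path from $v$ to $u$; since $f$ strictly increases along every arc of $D'$ (by the first case applied arc by arc), following this path yields $f(u) > f(v)$. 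In either case the endpoints receive distinct colours, so $f$ is proper.

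Finally I would count: $f$ takes values in $\{0,1,\dots,L'\}$, where $L'$ is the length of a longest directed path in $D'$, so $G$ is properly coloured with at most $L'+1$ colours and hence $k = \chi(G) \le L'+1$. As $D' \subseteq D$, every directed path of $D'$ is a directed path of $D$, so $L' \le L$ and a longest directed path of $D$ has length $L \ge L' \ge k-1$, as required. I expect the main conceptual hurdle to be exactly the treatment of directed cycles via the maximal acyclic subdigraph --- in particular verifying the second case of the properness claim, where \emph{maximality} (rather than acyclicity) is what forces the colours apart; the acyclic case and the final counting are routine.
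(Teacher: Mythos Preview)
Your argument is correct: it is exactly the classical proof of the Gallai--Hasse--Roy--Vitaver theorem via a maximal acyclic subdigraph and the longest-path colouring, and the two cases (arc in $A'$ versus arc not in $A'$) are handled properly. Note, however, that the paper does not actually give a proof of this statement --- it is quoted as a known result (with references to \cite{gallai1968directed, hasse1965algebraischen, roy1967nombre, vitaver1962determination}) and marked with a \qedsymbol\ --- so there is nothing in the paper to compare your argument against.
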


\begin{proof}[Proof of Theorem~\ref{po-shenloh thm}]
Let $T$ be a tournament of order $n$, and consider an arbitrary edge-colouring $c\colon E(T)\rightarrow [r]$ of $T$ such that there is no directed path of length $\ell_i$ in colour $i$ for any $i\in [r]$.

This colouring induces an edge-coloring of the underlying undirected complete graph $K_n$, which partitions $K_n$ into $r$ edge-disjoint
subgraphs $G_1, G_2, \ldots, G_{r}$, each corresponding to a color class in the tournament. If $\chi(G)$ denotes the chromatic number of a graph $G$, then Theorem~\ref{thm_GHRV} and our assumptions on the various monochromatic path lengths imply that $\chi(G_i) \leq \ell_i$ for all $i$. By considering the product colouring on $K_n$, it follows that
\[n = \chi(K_n) \leq \prod_{i\leq r} \chi(G_i) \leq \prod_{i\leq r} \ell_i. \qedhere \]
\end{proof}

\section{Decomposing finite symmetric complete digraphs}
\label{sec4}

In this section, we use Theorem~\ref{po-shenloh thm} to prove our main decomposition theorem in the finite case. As a corollary to its proof, we make the useful structural observation that the number $k$ of monochromatic paths in the last colour needed to cover the vertex set gives rise to a subtournament of $\K{n}$ of order $k$ where the last colour does not occur---an observation which will be crucial further below in the proof our main result for the infinite case.

\begin{thm}\label{finite decomposition thm}
For any edge-colouring $c\colon E(\K{n})\rightarrow [r+1]$ such that there is no directed path of edge-length $\ell_i$ in colour $i$ for $i \in [r]$, the vertex set of $\K{n}$ can be covered by $\prod_{i
\leq r} \ell_i$ pairwise disjoint monochromatic directed paths in colour $r+1$.  
\end{thm}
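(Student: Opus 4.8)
The plan is to cover the vertices of $\K{n}$ greedily by monochromatic directed paths in colour $r+1$, extracting one maximal such path at a time, and then to argue that the number $k$ of paths needed is bounded by $\prod_{i\leq r}\ell_i$ via Theorem~\ref{po-shenloh thm}. Concretely, I would build a cover $P_1, P_2, \ldots, P_k$ inductively: having removed the vertices of $P_1,\ldots,P_{j-1}$, I let $P_j$ be a \emph{longest} monochromatic directed path in colour $r+1$ in the complete symmetric subdigraph induced on the remaining vertices. Since every vertex is removed by some path, this produces a cover of $V(\K{n})$ by pairwise disjoint monochromatic paths in colour $r+1$; the only thing to control is $k$.

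The heart of the argument is to exhibit a subtournament $T$ of $\K{n}$ on $k$ vertices in which colour $r+1$ never appears, for then Theorem~\ref{po-shenloh thm} applied to the induced $r$-colouring of $T$ (which still has no colour-$i$ directed path of length $\ell_i$, as a subdigraph) gives $k = |V(T)| \leq \prod_{i\leq r}\ell_i$, exactly the desired bound. To build $T$, I would select one representative vertex $x_j \in V(P_j)$ from each path, say the \emph{endpoint} (last vertex) of $P_j$, for $j \in [k]$, and take $T$ to be the subdigraph of $\K{n}$ induced on $\{x_1,\ldots,x_k\}$, with each edge oriented according to the direction in which it carries a colour other than $r+1$. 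The key claim is that between any two representatives $x_i$ and $x_j$ with $i<j$, at least one of the two oriented edges is \emph{not} coloured $r+1$, so that $T$ is a genuine tournament avoiding colour $r+1$.

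The main obstacle, and where the maximality of the $P_j$ must be used, is precisely this claim. Suppose for contradiction that for some $i<j$ both edges between $x_i$ and $x_j$ have colour $r+1$. The natural argument is that $x_j$ was chosen from a path $P_j$ extracted \emph{after} $P_i$ was removed, so $x_j$ was still available when $P_i$ was being (or could have been) formed; if the edge $(x_i, x_j)$ is coloured $r+1$, one could append $x_j$ to the colour-$(r+1)$ path $P_i$ ending at $x_i$, contradicting the maximality of $P_i$ at the stage it was chosen. To make this clean I would in fact choose the endpoint of each $P_j$ and use the edge running \emph{from} the endpoint of the earlier path \emph{to} any later representative: since the later representatives all lie in the digraph that remained when $P_i$ was selected as a \emph{longest} colour-$(r+1)$ path, a colour-$(r+1)$ edge from $x_i$ to such a vertex would extend $P_i$, the contradiction. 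Hence that edge avoids colour $r+1$, we orient it accordingly, and $T$ is a tournament on $k$ vertices using only colours $[r]$.

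Finally, applying Theorem~\ref{po-shenloh thm} to $T$ yields $k \leq \prod_{i\leq r}\ell_i$, completing the proof; I would also record, as the structural corollary flagged in the section preamble, that this construction explicitly produces the colour-$(r+1)$-free subtournament of order $k$, since that object is exactly what the infinite case will need.
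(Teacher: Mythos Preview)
Your proof is correct and follows essentially the same strategy as the paper: both extract an extremal family of colour-$(r+1)$ paths and show that one representative from each path yields a tournament avoiding colour $r+1$, to which Theorem~\ref{po-shenloh thm} applies. The only cosmetic difference is that the paper works with a cover of minimum size $k$ whose length-sequence is lexicographically minimal and uses the \emph{first} vertex $v_i$ of each $P_i$ (arguing that a colour-$(r+1)$ edge $(v_i,v_j)$ for $i<j$ would allow moving $v_i$ to the front of $P_j$, shrinking $|P_i|$), whereas you use a greedy longest-first cover and take endpoints; both variants give the corollary, since your tournament has order at least the minimum $k$ and any $k$-vertex subset is again a tournament in colours $[r]$.
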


\begin{proof}
Let $k$ be the smallest integer such that the vertex set of $\K{n}$ can be covered by monochromatic pairwise disjoint directed paths in colour $(r+1)$. Choose a sequence $\mathcal{P}:=(P_1,\dots,P_{k})$ of such monochromatic paths satisfying that $(|P_1|,\ldots,|P_{k}|)$ is minimal with respect to the lexicographical order of $\N^k$. Let us write $v_i$ for the first vertex on $P_i$ for $i\leq k$. Then $(v_i,v_j)$ has some colour $\leq r$, whenever $i<j$. Otherwise, the path system $$(P_1,\ldots,\mathring{v}_iP_i,\ldots, v_iP_j,\ldots,P_{k})$$ would contradict the minimality
of $\mathcal{P}$. Hence, the set $\{(v_i,v_j) \colon i<j\}$ defines a tournament $T$ on $\{v_1,\dots, v_k\}$ with edge colours in $[r]$. By Theorem~\ref{po-shenloh thm}, we have $k\le \prod_{i\leq r} \ell_i$, completing the proof. 
\end{proof}

As a consequence to the proof of Theorem \ref{finite decomposition thm} we obtain the following corollary:

\begin{cor}\label{cor finite decomposition thm}
Let $c\colon E(\K{n})\rightarrow [r+1]$ be an edge-colouring of $\K{n}$. If $k$ is the smallest number of pairwise disjoint monochromatic directed paths in colour $r+1$ needed to cover the vertex set of $\K{n}$, then $\K{n}$ contains a tournament of order $k$ with edge-colours in $[r]$ as a subgraph. \qed
\end{cor}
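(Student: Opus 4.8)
The statement is exactly the tournament construction carried out inside the proof of Theorem~\ref{finite decomposition thm}, now isolated from the path-length hypotheses. The plan is therefore to re-run that argument verbatim and simply to drop the final appeal to Theorem~\ref{po-shenloh thm}, which was the only place where the bounds $\ell_i$ entered.

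Concretely, I would fix a covering of the vertex set of $\K{n}$ by $k$ pairwise disjoint monochromatic directed paths in colour $r+1$, where $k$ is the minimal number as in the statement. Among all such coverings by exactly $k$ paths, choose one, say $\mathcal{P} = (P_1, \ldots, P_k)$, for which the tuple of edge-lengths $(|P_1|, \ldots, |P_k|)$ is lexicographically minimal in $\N^k$. Let $v_i$ denote the first vertex of $P_i$.

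The key step is to show that for all $i < j$ the oriented edge $(v_i, v_j)$ has colour at most $r$. Suppose instead that $(v_i, v_j)$ has colour $r+1$. Then replacing $P_i$ by $\mathring{v}_i P_i$ (that is, deleting its first vertex) and $P_j$ by the path $v_i P_j$ obtained by prepending $v_i$ to $P_j$ along the colour-$(r+1)$ edge $(v_i, v_j)$ yields another covering by $k$ pairwise disjoint monochromatic directed paths in colour $r+1$, whose length tuple is strictly smaller in the lexicographic order, contradicting the minimality of $\mathcal{P}$. Hence every forward edge $(v_i, v_j)$ with $i < j$ has colour in $[r]$.

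Finally, the edge set $\set{(v_i, v_j) \colon i<j}$ contains exactly one orientation of each pair from $\set{v_1, \ldots, v_k}$ and so defines a tournament $T$ of order $k$ on $\set{v_1, \ldots, v_k}$; by the previous step all of its edges have colour in $[r]$, and since each $(v_i, v_j)$ is an edge of $\K{n}$, this $T$ is the desired subtournament. I anticipate no genuine obstacle, as this merely repeats the relevant portion of the proof of Theorem~\ref{finite decomposition thm}; the only point requiring a moment's care is verifying that the swap above produces a valid path system, namely that the two modified paths remain pairwise disjoint (since $v_i$ is removed from one and added to the other) and monochromatic in colour $r+1$.
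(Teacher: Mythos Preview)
Your proposal is correct and follows exactly the paper's approach: the corollary is obtained by re-running the argument from the proof of Theorem~\ref{finite decomposition thm} and stopping before the appeal to Theorem~\ref{po-shenloh thm}. The lexicographic-minimality swap you describe is precisely the one carried out there, and your closing remark about the modified system remaining a valid disjoint path cover is the only detail worth noting (with the trivial addendum that if $P_i$ is a single vertex, the swap yields a cover by fewer than $k$ paths, contradicting minimality of $k$ instead).
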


\section{A partition result}
\label{sec5}

A non-trivial digraph $G$ is said to be \emph{strongly ${<}\aleph_0$-connected} if for every two vertices $v$ and $w$ of $G$, there are infinitely many independent directed $v-w$ paths, and also infinitely many independent directed $w-v$ paths (equivalently: after deleting any finite number of vertices from $G-\{v,w\}$, the vertices $v$ and $w$ lie in the same strongly connected component).

In this section we gather two technical results. The first can be extracted from the proof of \cite[Theorem 1.3]{Guggiari}.

\begin{lemma}\label{lemma strongly connected then path}
Every countable strongly ${<}\aleph_0$-connected digraph can be covered by a single one-way infinite directed path. \qed
\end{lemma}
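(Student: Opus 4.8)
The plan is to build the covering path greedily, one vertex at a time, exploiting the abundance of internally disjoint connections guaranteed by strong ${<}\aleph_0$-connectivity. First I would observe that the hypothesis forces $G$ to be countably infinite: a finite digraph admits only finitely many internally disjoint $v$--$w$ paths, so the required one-way infinite path is genuinely the right target object. Fix an enumeration $V(G)=\{u_1,u_2,\dots\}$ and aim to construct a nested sequence $P_1\subseteq P_2\subseteq\cdots$ of finite directed paths whose union is the desired path, maintaining at each stage the finite set $S_n$ of covered vertices, the current endpoint $x_n$ (the last vertex of $P_n$), and the bookkeeping guarantee that $u_n\in S_n$.

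The extension step is the heart of the argument. I would start with $P_1$ the single vertex $u_1$. At stage $n$, if $u_n$ is already covered, do nothing; otherwise let $x=x_{n-1}$ and put $F:=S_{n-1}\setminus\{x\}$, a finite set avoiding both $x$ and $u_n$ (the latter since $u_n\notin S_{n-1}$ in this case). The equivalent, deletion-based formulation of strong ${<}\aleph_0$-connectivity supplies infinitely many internally disjoint directed $x$--$u_n$ paths; since two internally disjoint paths share no internal vertex, each element of $F$ lies on at most one of them, so at most $|F|$ of these paths are blocked and at least one directed $x$--$u_n$ path $Q$ avoids $F$ entirely. I would then append $Q$ to $P_{n-1}$, identifying the shared endpoint $x$, to obtain $P_n$ with new endpoint $u_n$; because the internal vertices of $Q$ are all fresh, $P_n$ is again a genuine directed path and $u_n\in S_n$, as required.

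Finally I would verify that $P:=\bigcup_n P_n$ works. It is a directed path, since each segment is appended forward at the current endpoint and never revisits an earlier vertex; it covers every $u_n$ by construction; and it is genuinely one-way infinite because $G$ is infinite, so infinitely many stages perform a nontrivial extension and the lengths of the $P_n$ grow without bound. The only real obstacle is the extension step—routing a path to the next uncovered vertex around the finitely many already-used vertices—and this is exactly where the \emph{strength} of the hypothesis matters: one connecting path would not survive the pigeonhole count, whereas infinitely many independent paths always do. I would also note that we invoke the hypothesis for the ordered pair $(x_{n-1},u_n)$, which over the construction ranges over arbitrary ordered pairs, so it is precisely the two-directional form of the definition (infinitely many $v$--$w$ \emph{and} infinitely many $w$--$v$ paths) that keeps the extension available no matter how the endpoint and the next target are situated.
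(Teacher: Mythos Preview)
Your argument is correct. The paper itself does not prove this lemma but simply records it as extractable from \cite[Theorem~1.3]{Guggiari}; your greedy construction---extend the current path to the least-indexed uncovered vertex via one of the infinitely many independent connecting paths, all but finitely many of which avoid the vertices used so far---is precisely the standard argument one finds there.
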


The second of our technical results is inspired by the techniques used in the proofs of \cite[Theorem 1.3 \& 1.5]{Guggiari}, but requires some modifications, and therefore we give the complete proof.

\begin{lemma}\label{lemma for infinite decomposition thm}
For every edge-colouring $c\colon E(\K{\N})\rightarrow [r+1]$ of $\K{\N}$ such that there is no directed path of edge-length $\ell_i$ in colour $i$ for $i \in [r]$, there is a finite partition $\U$ of $\N$ such that the edges coloured with colour $r+1$ induce a strongly ${<}\aleph_0$-connected subgraph on every non-singleton partition class of $\U$.
\end{lemma}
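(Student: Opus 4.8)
The plan is to build the partition $\U$ by a careful recursive construction that groups together vertices which are ``well-connected'' to each other via colour-$(r+1)$ edges. The key obstacle is that strong ${<}\aleph_0$-connectivity is a demanding condition: it requires infinitely many independent directed paths (in both directions) between every pair of vertices in a class, using \emph{only} colour-$(r+1)$ edges. So I first want to understand what forces two vertices \emph{not} to be strongly connected in colour $r+1$, and then argue that the obstructions cannot be too large.

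\medskip

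First I would fix the colour-$(r+1)$ subgraph $H$ of $\K{\N}$ and consider the preorder where $v \preceq w$ if there are infinitely many independent directed $v$--$w$ paths in $H$ that survive the deletion of any finite vertex set (equivalently, $v$ and $w$ stay in a common strong component after removing any finite set from $H - \{v,w\}$). The natural partition classes are the equivalence classes of the symmetric part of this relation. The content of the lemma is then that there are only \emph{finitely many} non-singleton classes, or more precisely that the whole quotient structure is finite. Here is where I expect to lean on the hypothesis: between two vertices $v,w$ that fail to be mutually strongly-connected in colour $r+1$, after deleting a suitable finite separator one of the two directions has only finitely many internally disjoint colour-$(r+1)$ paths, and by Menger's theorem a finite vertex set separates $v$ from $w$ in $H$. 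Since every non-edge of $H$ (every pair not joined in colour $r+1$) is in fact joined in \emph{both} directions by an edge of some colour $i \le r$, these separating sets and the structure they impose should force the ``skeleton'' of the partition to look like an edge-coloured tournament or complete symmetric digraph avoiding long monochromatic paths in colours $\le r$.

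\medskip

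Concretely, the plan is to select one representative from each prospective class and show that, for representatives $v \ne w$ lying in distinct classes, one of the two ordered pairs $(v,w)$ or $(w,v)$ receives a colour $\le r$ in a way that respects a transitive (tournament-like) structure on the representatives---exactly the situation Theorem~\ref{po-shenloh thm} governs. The hypothesis that there is no monochromatic directed path of length $\ell_i$ in colour $i \le r$ then bounds the number of representatives by $\prod_{i \le r} \ell_i$, hence makes the number of classes finite. To make this rigorous I would proceed by a transfinite or simple recursive extraction: repeatedly pull out a maximal strongly ${<}\aleph_0$-connected set in colour $r+1$, remove it, and continue; the finiteness of the tournament on representatives guarantees the recursion halts after finitely many steps, and the leftover vertices (those not absorbed into any large class) form singleton classes.

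\medskip

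The main obstacle I anticipate is the interaction between the ``infinitely many independent paths'' requirement and the recursive removal of vertices: when I delete an already-extracted class, I must ensure I do not destroy the strong connectivity certificates of the vertices I keep, and conversely that the between-class non-connectivity genuinely yields a colour-$\le r$ edge obeying the tournament/transitivity structure needed for Theorem~\ref{po-shenloh thm}. Establishing that a failure of mutual strong ${<}\aleph_0$-connectivity in colour $r+1$ translates into a \emph{consistently oriented} low-colour edge between representatives---so that the representatives really do span a genuine edge-coloured tournament rather than an arbitrary digraph---is the delicate point, and I expect the proof of Guggiari's \cite[Theorem 1.3 \& 1.5]{Guggiari} to supply the right machinery (iterated separator/Menger arguments together with a fusion of separated pieces) that I would adapt here.
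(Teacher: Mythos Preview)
Your proposal identifies a natural target structure but leaves the essential step unproven. The claim that representatives $v,w$ from distinct strong-${<}\aleph_0$-connectivity classes must have one of $(v,w)$ or $(w,v)$ coloured in $[r]$ is simply false at the level of individual edges: two vertices can have both directed edges between them in colour $r+1$ and still fail to be strongly ${<}\aleph_0$-connected (a single edge in each direction does not give infinitely many independent paths). So any tournament on representatives cannot come from the literal edge-colours; it would have to be manufactured from the global separation structure, and you give no mechanism for doing this---nor for ensuring consistency of orientation across all pairs simultaneously. You also do not address why the number of singleton classes should be finite, which is required for the lemma and is not touched by a tournament bound on non-singleton representatives. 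Finally, the ``recursive extraction'' alternative faces the very problem you flag: removing a class can sever the connectivity witnesses of the remaining vertices.

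The paper's proof takes a completely different route and never invokes Theorem~\ref{po-shenloh thm} here. It builds a chain of finite partitions $\U_1,\dots,\U_{r+1}$, refining one colour at a time: at stage $k$ one partitions $\N$ according to the length of the longest colour-$k$ directed path starting at each vertex (at most $\ell_k$ classes), takes the common refinement with $\U_k$, and then strips off into singletons the finitely many vertices whose out-degree in colours $\geq k+1$ within their class is finite. The invariants maintained are that inside each class every vertex has bounded in-degree in colours $<k$ and (if the class is non-singleton) infinite out-degree in colours $\geq k$. For $k=r+1$ this yields: within every non-singleton class, each vertex has cofinitely many colour-$(r+1)$ in-neighbours and infinitely many colour-$(r+1)$ out-neighbours, which immediately gives infinitely many independent length-$2$ colour-$(r+1)$ paths between any pair. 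The tournament-style argument you sketch is what the paper uses \emph{later}, in Section~\ref{sec6}, to bound the size of this already-established finite partition---not to prove its finiteness.
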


\begin{proof}
For $k=1,2,\ldots, r+1$ we will recursively define finite partitions $\U_k$ of $\N$ such that
\begin{enumerate}
\item $\U_{k}$ refines $\U_{k-1}$, 
\item\label{item3} for every $U\in \U_k$ and every vertex $u\in U$, the set $\{u'\in U\colon c(u',u)<k \}$ has cardinality at most $\sum_{j<k} \ell_j$, 
\item\label{item4} for every non-singleton partition class $U \in \U_{k}$, all vertices $u\in U$ satisfy that $\{u'\in U\colon c(u,u') \geq k \}$ is infinite.
\end{enumerate} 

Before we describe the recursive construction, let us see that $\U:=\U_{r+1}$ is as desired. Let $U \in \U$ be a non-singleton partition class, and let $v \neq w \in U$. To see that $U$ is ${<}\aleph_0$-connected, let us write $N^+_{r+1}(v)$ and $N^-_{r+1}(v)$ for the out- and in-neighbourhood of $v$ in colour $r+1$ respectively. By property~(\ref{item4}), we have $N^+_{r+1}(v) \cap U$ is infinite, and by property~(\ref{item3}), $N^-_{r+1}(w) \cap U$ is cofinite in $U$. Therefore, 
\[N^+_{r+1}(v) \cap N^-_{r+1}(w) \cap U \]
is infinite, and so there are infinitely many independent monochromatic directed $v-w$-paths in $U$ (each of length $2$) in colour $r+1$.

We now proceed with the recursive construction of the $\U_k$. For $k=1$, properties $(1)-(3)$ are trivially satisfied for $\U_1:= \{\N\}$. Now, assume that $\U_k$ has already been defined. For $0\leq i < \ell_k$ let $A_i$ consist of those vertices $a\in \N$ such that the longest $k$-coloured directed path in $\K{\N}$ with first vertex $a$ has length $i$. Then define $\A:=\{A_i\colon 0 \leq i < \ell_k\}\backslash \{\emptyset\}$. 

\begin{claim}{1}
$\A$ is a finite partition of $\N$ and for every partition class $A\in\A$ and every vertex $a\in A$ we have
$\{a'\in A\colon c(a',a)=k\}$ has cardinality at most $\ell_k$.
\end{claim}

\begin{proof}[Proof of Claim~1]\renewcommand{\qedsymbol}{$\Diamond$}
Cf.\ \cite[Theorem 1.3, Claim 1]{Guggiari}. Since $\K{\N}$ contains no directed monochromatic path in colour $k$ of length $\ell_k$ or bigger, it is clear that $\mathcal{A}$ is indeed a finite partition. Suppose for some $a \in A \in \A$ we have $|N^-_k(a) \cap A| > \ell_k$. Consider a longest monochromatic path $P$ in $\K{\N}$ in colour $k$ with first vertex $a$. Then $(N^-_k(a) \cap A) \setminus P \neq \emptyset$, and so for any $ a' \in (N^-_k(a) \cap A) \setminus P$, the path $P'=a'P$ is a strictly longer monochromatic path in colour $k$ starting at an element in the same partition class $A \in \A$, contradicting the definition of $\A$.
\end{proof}

Now consider the smallest common refinement $\mathcal{V} := \{U\cap A \colon U\in \U_k, A\in \A\}$.

\begin{claim}{2}
For every $V\in \mathcal{V}$ and every vertex $v\in V$, the set $\{v'\in V\colon c(v',v)<k+1 \}$ has cardinality at most $\sum_{j<k+1} \ell_j$.
\end{claim}

\begin{proof}[Proof of Claim~2]\renewcommand{\qedsymbol}{$\Diamond$}
Fix $v \in V\in \mathcal{V}$. Then $V = U \cap A$ for some $U \in \U_k$ and $A \in \A$. By property~(\ref{item3}), we have $\{u'\in U\colon c(u',v)<k \}$ has cardinality at most $\sum_{j<k} \ell_j$, and by Claim~1 we have $\{a'\in A\colon c(a',v)=k\}$ has cardinality at most $\ell_k$. Hence, $\{v'\in U \cap A \colon c(v',v)<k+1 \}$ has cardinality at most $\sum_{j<k+1} \ell_j$. 
\end{proof}

For $V\in\mathcal{V}$, let us define $X(V) = \{v \in V \colon |\{ v' \in V \colon c(v,v') \geq k+1\} | \text{ is finite}\}$.

\begin{claim}{3}
For every infinite partition class of $V\in\mathcal{V}$ the set $X(V)$ has cardinality at most $\sum_{j<k+1} \ell_j$.
\end{claim}

\begin{proof}[Proof of Claim~3]\renewcommand{\qedsymbol}{$\Diamond$}
Cf.\ \cite[Theorem 1.3, Claim 2]{Guggiari}. Indeed, consider some infinite $V \in \mathcal{V}$ and suppose for a contradiction that there is a finite subset $X\subseteq X(V)$ with $|X|>\sum_{j<k+1} \ell_j$. Because $V$ is infinite, there is a vertex:
\[w \in V \setminus \bigcup_{x \in X} \{ v' \in V \colon c(x,v') \geq k+1\}.\]
Then $c(x,w)<k+1$ for all $x \in X$, and so it follows from Claim~2 applied to the vertex $w \in V$ that
\[\sum_{j<k+1} \ell_j<|X| \leq |\{v'\in V\colon c(v',w)<k+1 \}| \leq \sum_{j<k+1} \ell_j, \]
a contradiction. 
\end{proof}

Finally, let $\mathcal{S}$ be the collection of singletons of the form $\{v\}$ for which $v$ is either part of a finite partition class of $\mathcal{V}$, or is contained in a set $X(V)$ for some $V \in \mathcal{V}$. By induction assumption and Claim~3, we know that $\mathcal{S}$ is finite. 

\begin{claim}{4}
$\U_{k+1}:=\mathcal{S}\cup \{V\backslash X(V)\colon V\in\mathcal{V} \text{ is infinite}\}$ satisfies properties $(1)-(3)$.
\end{claim}

\begin{proof}[Proof of Claim~4]\renewcommand{\qedsymbol}{$\Diamond$}
By construction, $\U_{k+1}$ is a finite partition of $\N$ such that every non-singleton partition class is infinite. Property~(1) for $\U_{k+1}$ is obvious. 
Property~(\ref{item3}) follows from Claim~2, as $\U_{k+1}$ is a refinement of $\mathcal{V}$. 
Finally, for property~(\ref{item4}) consider some infinite $U \in \U_{k+1}$. Then $U = V \setminus X(V)$ for some infinite $V \in \mathcal{V}$. By definition of $X(V)$, it follows that for all $u \in U$ the
\[ \{v \in V \colon c(u,v) \geq k+1 \} \]
is infinite, and therefore, as $X(V)$ is finite by Claim~3, we also have that 
\[ \{v \in V \setminus X(V) \colon c(u,v) \geq k+1 \} \]
is infinite, which verifies property~(\ref{item4}) for the partition class $U \in \U_{k+1}$.
\end{proof}
Thus, we see that $\U_{k+1}$ is as required, completing the recursive construction.
\end{proof}

\section{Decomposing countably infinite symmetric complete digraphs}
\label{sec6}

\begin{thm}\label{infinite decomposition thm} 
For every edge-colouring $c\colon E(\K{\N})\rightarrow [r+1]$ of $\K{\N}$ for which there is no directed path of length $\ell_i$ in colour $i$ for any $i \in [r]$, the vertex set $\N$ can be covered by $\prod_{i\leq r} \ell_i$ pairwise disjoint monochromatic directed paths in colour $r+1$.
\end{thm}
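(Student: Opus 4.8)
The plan is to combine the partition from Lemma~\ref{lemma for infinite decomposition thm} with a tournament-counting argument in the spirit of Theorem~\ref{finite decomposition thm}. First I would apply Lemma~\ref{lemma for infinite decomposition thm} to obtain a finite partition $\U$ of $\N$ whose non-singleton classes are strongly ${<}\aleph_0$-connected in colour $r+1$, and Lemma~\ref{lemma strongly connected then path} to cover each such class by a single one-way infinite directed path in colour $r+1$ (singletons being trivial paths of length $0$). This already yields a cover of $\N$ by $|\U|$ pairwise disjoint monochromatic paths in colour $r+1$; the whole difficulty is to reorganise it into at most $\prod_{i\leq r}\ell_i$ such paths.

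Next I would coarsen $\U$. Call two classes \emph{linked} if there are infinitely many colour-$(r+1)$ edges between them in each of the two directions, and let $\U^{\ast}$ be the partition into the vertex sets of the connected components of the resulting auxiliary graph. A short check shows that the union of a component is again strongly ${<}\aleph_0$-connected in colour $r+1$ (after deleting any finite vertex set, infinitely many colour-$(r+1)$ edges between consecutive linked pieces survive, preserving reachability), so each non-singleton class of $\U^{\ast}$ is still covered by one colour-$(r+1)$ path. The purpose of coarsening is the dichotomy it forces: if two distinct classes $C\neq C'$ of $\U^{\ast}$ are \emph{not} linked, then in at least one direction, say from $C$ to $C'$, only finitely many colour-$(r+1)$ edges occur. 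Hence there is a finite set $F\subseteq C'$ (the heads of those edges) such that $(u,u')$ has colour $\leq r$ for all $u\in C$ and all $u'\in C'\setminus F$.

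Now I would assemble a single colour-$[r]$ tournament whose order equals the number of paths. Let $D$ be the (finite) set of vertices forming singleton classes of $\U^{\ast}$. Applying Theorem~\ref{finite decomposition thm} and Corollary~\ref{cor finite decomposition thm} to the induced $\K{|D|}$, cover $D$ by $k_D$ disjoint colour-$(r+1)$ paths and extract a colour-$[r]$ tournament $T_D\subseteq D$ of order $k_D$. For each non-singleton class $C$ choose a representative $w_C\in C$ avoiding the finitely many vertices forbidden by the dichotomy above, applied to every pair of $C$ with another non-singleton class and with every vertex of $T_D$; this is possible since $C$ is infinite and only finitely many constraints arise. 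Writing $a$ for the number of non-singleton classes, the set $W^{\ast}=\{w_C : C \text{ non-singleton}\}\cup T_D$ is then a colour-$[r]$ tournament of order $a+k_D$: the representatives are pairwise good and good against $T_D$ by the choice of the $w_C$, while $T_D$ is a tournament by construction. As the colouring forbids a colour-$i$ path of length $\ell_i$ for each $i\leq r$, Theorem~\ref{po-shenloh thm} gives $|W^{\ast}|\leq \prod_{i\leq r}\ell_i$. Since $a+k_D$ is exactly the number of paths in our cover (one per non-singleton class, plus the $k_D$ paths covering $D$), this is the desired bound.

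The main obstacle is the treatment of singleton classes. A non-singleton class is infinite, so its representative can dodge any finite set of bad vertices; but a singleton is forced, and two singletons joined by colour-$(r+1)$ edges in both directions would spoil the tournament, and cannot be absorbed by coarsening since two single vertices are never linked. The resolution is to isolate all singleton vertices into the finite set $D$ and invoke the finite theory there, trading the troublesome singletons for the tournament $T_D$ of the correct size; the crux is then to verify that $T_D$ merges with the representatives $w_C$ into one global colour-$[r]$ tournament, with all cross-pairs good, so that the counting closes up to exactly $\prod_{i\leq r}\ell_i$.
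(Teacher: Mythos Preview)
Your overall architecture matches the paper's: start from the partition $\U$ of Lemma~\ref{lemma for infinite decomposition thm}, arrange that between any two classes a ``direction'' in colour $r+1$ is small, handle the singletons via Corollary~\ref{cor finite decomposition thm}, pick representatives, build a colour-$[r]$ tournament, and apply Theorem~\ref{po-shenloh thm}. The gap lies in your coarsening step.

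You declare two classes \emph{linked} when there are infinitely many colour-$(r+1)$ edges in each direction, and assert that the union of a linking component is strongly ${<}\aleph_0$-connected. This is not true with your definition. If $U,U'\in\U$ are infinite and all colour-$(r+1)$ edges between them (in both directions) are incident with a single vertex $a\in U$, then $U$ and $U'$ are linked in your sense, yet $U\cup U'$ is not strongly ${<}\aleph_0$-connected (remove $a$), and indeed cannot be covered by a single one-way infinite path: such a path visits $a$ only once and can use only finitely many $U$--$U'$ crossings, so it eventually stays in one of $U,U'$ and misses infinitely many vertices of the other. The same pathology arises when a singleton $\{u\}$ is linked to two infinite classes $U,U'$ that are themselves unlinked; your component $U\cup\{u\}\cup U'$ has $u$ as a cut vertex. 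A second, related issue: distinct components $C,C'$ of your linking graph need not be unlinked as sets---pairwise unlinkedness of their $\U$-subclasses does not prevent $C$ and $C'$ from having infinitely many colour-$(r+1)$ edges in both directions---so the dichotomy you invoke between $\U^{\ast}$-classes is not yet established.

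The paper sidesteps both problems by choosing $\U$ of \emph{minimal} cardinality among all partitions satisfying the conclusion of Lemma~\ref{lemma for infinite decomposition thm}. Minimality yields the dichotomy directly (Claims~1 and~2): for two infinite classes one of the two maximal directed \emph{matchings} in colour $r+1$ is finite, and for a singleton versus an infinite class one of the two neighbourhoods is finite, since otherwise one could merge and contradict minimality. The switch from ``infinitely many edges'' to ``infinite matching'' is exactly what makes the merge preserve strong ${<}\aleph_0$-connectedness; the finite maximal matching then supplies the finite exceptional set $W_{\vec e}$ used to pick representatives. Your argument becomes correct if you either adopt this minimality device or redefine ``linked'' via infinite matchings (and neighbourhoods for singleton--infinite pairs) and then iterate the coarsening only between infinite classes, absorbing singletons one at a time.
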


\begin{proof}
By Lemma~\ref{lemma for infinite decomposition thm} there exists a finite partition $\U$ of the vertex set $\N$ such that every partition class is either a singleton or strongly ${<}\aleph_0$-connected in colour $r+1$. Choose such a $\U$ with minimal cardinality. 

\begin{claim}{1}
Let $U,U'\in\U$ be infinite partition classes. Moreover, suppose that $M$ is a maximal directed $U-U'$ matching in colour $r+1$ and $M'$ is a maximal directed $U'-U$ matching in colour $r+1$. Then $M$ or $M'$ is finite.  
\end{claim}

\begin{proof}[Proof of Claim~1]\renewcommand{\qedsymbol}{$\Diamond$}
Suppose for a contradiction that $U,U'\in \U$ are both infinite partition classes such that there exist infinite directed matchings $M$ and $M'$ as in Claim 1. We aim to show that $U\cup U'$ is strongly ${<}\aleph_0$-connected in colour $r+1$, contradicting the minimal choice of $\U$. 

Towards this, fix vertices $v \in U$ and $w \in U'$. 
Since $U$ is ${<}\aleph_0$-connected in colour $r+1$ we find a directed $v-A$ fan in colour $r+1$ for some infinite $A\subseteq \{u\in U\colon (u,u')\in M\}$. Let $B=\{u'\in U'\colon u \in A, (u,u')\in M\}$. Similarly, since $U'$ is ${<}\aleph_0$-connected in colour $r+1$, there also exists a directed $B'-w$ fan for some infinite $B' \subset B$. Combining these two fans with suitable edges from $M$ shows that there are infinitely many independent monochromatic directed $v-w$-paths in colour $r+1$ in $U\cup U'$. 

Applying the same argument to the matching $M'$, one also finds infinitely many independent monochromatic directed $w-v$-paths in colour $r+1$ in $U\cup U'$. 
\end{proof}

Recall that we write $N^+_{r+1}(v)$ and $N^-_{r+1}(v)$ for the out- and in-neighbourhood respectively of a vertex $v\in\N$ in colour $r+1$. 
\begin{claim}{2}
Let $U,U'\in\U$ be partition classes such that $U=\{u\}$ is a singleton and $U'$ is infinite. Then $N^+_{r+1}(u)\cap U'$ or $N^-_{r+1}(u)\cap U'$ is finite. 
\end{claim}

\begin{proof}[Proof of Claim~2]\renewcommand{\qedsymbol}{$\Diamond$}
Suppose for a contradiction that there are partition classes $U,U'\in \U$ with $U=\singleton{u}$ and $U'$ infinite such that there are infinite sets $\vec{E}$ and $\vec{E}'$ of directed $u-U'$ and $U'-u$ edges respectively in colour $r+1$. Again, we claim that $U\cup U'$ is strongly ${<}\aleph_0$-connected in colour $r+1$, contradicting the minimal choice of $\U$. 


Towards this, fix a vertex $w \in U'$. Let $A= \{u' \in U'\colon (u,u')\in \vec{E}\}$. Since $U'$ is strongly ${<}\aleph_0$-connected in colour $r+1$, there is a directed $A'-w$ fan in colour $r+1$ for some infinite $A' \subseteq A$. Combining the fan with suitable edges from $\vec{E}$ shows that there are infinitely many independent monochromatic directed $u-w$-paths in colour $r+1$ in $U\cup U'$.   

Applying the same argument to set $\vec{E}'$ of edges, one also finds infinitely many independent monochromatic directed $w-u$-paths in colour $r+1$ in $U\cup U'$. 
\end{proof}

Let $\mathcal{S}$ be the set of singletons in $\U$ and $S:=\bigcup \mathcal{S}$. Furthermore, let $k$ be the smallest number of pairwise disjoint monochromatic directed paths in colour $r+~1$ needed, to cover $S$ in $\K{\N}[S]$ with regard to the colouring induced by $c$. By Corollary~\ref{cor finite decomposition thm}, there exists a sub-tournament $T'$ of order $k$ in $\K{\N}[S]$ with edge-colours in $[r]$. Let $\mathcal{S}':=\set{\singleton{v}  \colon   v\in V(T')}$. 

We now define a digraph $H$ with vertex set $\mathcal{S}'\cup (\U\backslash\mathcal{S})$, where we insert a direct edge $\vec{e}=(U,U')\in E(H)$ and define a finite set $W_{\vec{e}} \subset \N$ for every such edge, if
\begin{enumerate}[label={\upshape($\dagger$\arabic{*})}]
\item\label{bla1} both $U$ and $U'$ are infinite and there exists a maximal finite directed $U-U'$ matching $M_{\vec{e}}$ in colour $r+1$. In this case, define $W_{\vec{e}}:=V[M_{\vec{e}}]$ to be the set of vertices covered by $M_{\vec{e}}$.
\item\label{bla2} If $U=\{u\}$ is a singleton, $U'$ is infinite and $W_{\vec{e}}:=N^+_{r+1}(u) \cap U'$ is finite.
\item\label{bla3} If $U$ is infinite, $U'=\{u'\}$ is a singleton and $W_{\vec{e}}:=N^-_{r+1}(u') \cap U$ is finite.   
\item\label{bla4} Or if $U=\{u\}$ and $U'=\{u'\}$ are both singletons and $(u,u')\in E(T')$. In this case, we put $W_{\vec{e}} := \emptyset$. 
\end{enumerate}

Then we define
\[W = \bigcup_{\vec{e} \in E(H)} W_{\vec{e}}\]
which by construction is a finite subset of the vertex set $\N$ of $\K{\N}$.

Choose a vertex $x_U \in U \setminus W$ for every partition class $U\in \mathcal{S}'\cup(\U\backslash\mathcal{S})$ and write ${X=\set{x_U\colon U\in \mathcal{S}'\cup(\U\backslash\mathcal{S})}}$. Since $W$ is finite, and by construction has non-trivial intersection only with the infinite partition classes in $\U\backslash\mathcal{S}$, this is always possible. 

\begin{claim}{3}
The digraph $\K{\N}[X]$ induced by the vertices of $X$ contains a spanning sub-tournament $T$ with edge colours in $[r]$. 
\end{claim}

\begin{proof}[Proof of Claim~3]\renewcommand{\qedsymbol}{$\Diamond$}
We show that for every pair of vertices $x,x'\in X$, one of the directed edges $(x,x')$ or $(x',x)$ has a colour in $[r]$. Suppose $x \in U\in \U$ and $x' \in U'\in \U$. Since $T'$ is a tournament with edge colours in $[r]$, we may assume that one of $U$ and $U'$ infinite, and so it follows from Claim 1 and 2 that $(U,U')$ or $(U',U)$ is a directed edge of $H$. By symmetry, let us assume that $\vec{e}:=(U,U')\in E(H)$. 

\begin{itemize}
\item If $U$ and $U'$ are both infinite, then $(x,x')$ has a colour in $[r]$ because otherwise, as $x,x' \notin W$, the directed matching $M_{\vec{e}}\cup \set{(x,x')}$ contradicts the maximality of $M_{\vec{e}}$ in \ref{bla1}.
\item If $U=\singleton{u}$ is a singleton and $U'$ is infinite, then $(x,x')$ has a colour in $[r]$ because otherwise, $x' \in N^+_{r+1}(u)\cap U'\subseteq W$ by \ref{bla2}.  
\item If $U$ is infinite and $U'=\singleton{u'}$ is a singleton, then $(x,x')$ has a colour in $[r]$ because otherwise, $x \in N^-_{r+1}(u')\cap U\subseteq W$ by \ref{bla3}. \qedhere
\end{itemize}

\end{proof}

Let $T$ be a tournament as in Claim $3$. Since $c$ induces an $r$-edge colouring of $T$ such that there are no directed paths of length $\ell_i$ for any $i \in [r]$, it follows from Theorem \ref{po-shenloh thm} that
$|\mathcal{S}'|+|\U\backslash\mathcal{S}|=|T|\le\prod_{i\leq r}\ell_i$. But now we have $k=|\mathcal{S}'|$ pairwise disjoint monochromatic directed paths in colour $r+1$ in $\K{\N}[S]$ covering $S$ and---by Lemma \ref{lemma strongly connected then path}---we can cover each infinite partition class $U \in \U \setminus \mathcal{S}$ with a single one-way infinite monochromatic directed paths in colour $r+1$. Thus, we have covered the vertex set of $\K{\N}$ by $k + |\U \setminus \mathcal{S}| \leq \prod_{i\leq r}\ell_i$ many monochromatic directed paths in colour $r+1$, completing the proof of the theorem.
\end{proof}

\bibliographystyle{plain}
\bibliography{RedAndBluePaths}
\end{document}